\newcommand{\supp}{\mathrm{supp}\hspace{0.5mm}}
\newtheorem{theorem}{Theorem}[section]
\newtheorem{lemma}{Lemma}[section]
\newtheorem{proposition}{Proposition}[section]
\newtheorem*{theoremA}{Theorem A}
\begin{document}

\title[A remark on the Schr\"odinger operator on Wiener amalgam spaces]
{A remark on the Schr\"odinger operator on Wiener amalgam spaces}

\author[T. Kato]{Tomoya Kato}
\author[N. Tomita]{Naohito Tomita}

\address{Department of Mathematics, Graduate School of Science, Osaka University, Toyonaka, Osaka 560-0043, Japan}
\email[Tomoya Kato]{t.katou@cr.math.sci.osaka-u.ac.jp}
\email[Naohito Tomita]{tomita@math.sci.osaka-u.ac.jp}

\keywords{Sch\"odinger operator, Modulation spaces, Wiener amalgam spaces}
\subjclass[2010]{42B15, 42B35}

\begin{abstract}
In this paper, we study the boundedness of the Schr\"odinger operator $e^{i \Delta}$ on Wiener amalgam spaces and determine its optimal condition. 
\end{abstract}

\maketitle

\section{Introduction} \label{sec1}

In this paper, we consider the Schr\"odinger operator $e^{i \Delta}$ 
which is naturally arose from the Cauchy problem for the free Schr\"odinger equation
\begin{equation*}
\left\{ 
\begin{array}{ll}
i\partial_t u + \Delta u = 0, & t \in \mathbb{R},\ x \in \mathbb{R}^n, \\
u( 0, x )=u_0 (x),  &  x \in  \mathbb{R}^n.
\end{array} 
\right.
\end{equation*}
Here, the differential operator $\Delta$ is the usual Laplacian. 
The Schr\"odinger operator is a special case of Fourier multipliers
and denoted by
\begin{equation} \label{sch op}
e^{i \Delta} f (x) = (2\pi)^{-n} \int_{ \mathbb{R}^n } e^{ i x \cdot \xi } e^{ - i | \xi |^2 } \widehat f (\xi) d \xi.
\end{equation}

Wiener amalgam spaces $W_{p,q}^s$ are variation of modulation spaces $M_{p,q}^s$ 
which were introduced  from the view point of time-frequency analysis 
(see Feichtinger \cite{feichtinger 1983} and Gr\"ochenig \cite{grochenig 2001}). 
The essence of modulation and Wiener amalgam spaces is
to measure integrability (or decay property) of functions 
with respect to their space variable and frequency variable simultaneously. 
On the other hand, in $L^p$-Sobolev spaces $L^p_s$ and Besov spaces $B^s_{p,q}$, 
the space and frequency variables of functions must be considered separately.
This idea is a major difference among these spaces. 
The exact definitions and basic properties of modulation and Wiener amalgam spaces
will be given in Section \ref{sec2.2}. 
In the following, if $s = 0$ we simply write $ M_{p,q} $ and $ W_{p,q} $
instead of $ M_{p,q}^s $ and $ W_{p,q}^s $, respectively.

The boundedness problems of Fourier multipliers have been studied by many researchers. 
In particular, we here focus on the unimodular Fourier multiplier 
$e^{- i |D|^\alpha}$ for $\alpha \geq 0$,
which is denoted by the expression (\ref{sch op}) 
whose phase function $| \xi |^2$ is replaced by $| \xi |^\alpha$. 
In \cite{hormander 1960}, H\"ormander studied  
that $e^{- i |D|^2}$ (i.e., $e^{i\Delta}$) is bounded on $L^p$ if and only if $p = 2$.
After that, it was proved that 
for $1 < p <\infty$, $e^{- i |D|^\alpha}$ is bounded from $ L^p_s $ to $L^p$ 
if and only if $s \geq \alpha n | 1/p - 1/2 | $ (see, e.g., Miyachi \cite{miyachi 1980}).
Next, we shall consider the boundedness results on modulation spaces.
On modulation spaces, Gr\"ochenig and Heil \cite{grochenig heil 1999} invented 
that $e^{- i |D|^2}$ is bounded on $M_{p,q}$ for all $1 \leq p,q \leq \infty$
(see also Toft \cite{toft 2004} and Wang, Zhao and Guo \cite{wang zhao guo 2006}).
Then, the boundedness of $e^{-i |D|^\alpha}$ on modulation spaces
was studied for $0 \leq \alpha \leq 2$ 
by B\'enyi, Gr\"ochenig, Okoudjou and Rogers \cite{benyi grochenig okoudjou rogers 2007}, 
and for $\alpha > 2$ 
by Miyachi, Nicola, Rivetti, Tabacco and Tomita \cite{miyachi nicola rivetti tabacco tomita 2009}.
Optimality of these results can be found in, e.g., 
\cite[Theorem 1.2]{miyachi nicola rivetti tabacco tomita 2009} 
and \cite[Remark 3.4]{tomita 2010}.
Extracting the Schr\"odinger operator case, namely, $\alpha=2$, we have the following:

\begin{theoremA}
Let $s \in \mathbb{R}$ and $1 \leq p,q \leq \infty$. Then, $e^{ i \Delta}$ is bounded from $M^s_{p,q}$ to $ M_{p,q} $ if and only if $s \geq 0$.
\end{theoremA}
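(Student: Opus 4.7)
My plan is to dispose of the sufficient direction by an elementary inclusion and to handle the necessary direction by inverting $e^{i\Delta}$ on $M_{p,q}$, reducing the problem to a sharp embedding between weighted and unweighted modulation spaces. The ambient tool in both halves is Gr\"ochenig--Heil's theorem cited above: $e^{\pm i\Delta}$ is bounded on $M_{p,q}$ for every $1\leq p,q\leq\infty$.

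For sufficiency, if $s\geq 0$ then $\langle\xi\rangle^s\geq 1$ yields the trivial inclusion $M^s_{p,q}\hookrightarrow M_{p,q}$, and combining with $e^{i\Delta}:M_{p,q}\to M_{p,q}$ gives the claim at once. For necessity, I would observe that $e^{-i\Delta}$ is a Fourier multiplier with unimodular symbol of exactly the same type as $e^{i\Delta}$, so Gr\"ochenig--Heil again yields $e^{-i\Delta}:M_{p,q}\to M_{p,q}$. Assuming $e^{i\Delta}:M^s_{p,q}\to M_{p,q}$ is bounded, the composition $\mathrm{Id}=e^{-i\Delta}\circ e^{i\Delta}$ is bounded from $M^s_{p,q}$ to $M_{p,q}$, i.e., $M^s_{p,q}\hookrightarrow M_{p,q}$.

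It remains to show that this embedding forces $s\geq 0$. I would use high-frequency modulates of a fixed Schwartz function $g$ with STFT concentrated near the origin: setting $f_k(x)=e^{2\pi ik\omega\cdot x}g(x)$ for a unit vector $\omega$, the translation covariance $|V_\phi f_k(x,\xi)|=|V_\phi g(x,\xi-k\omega)|$ gives $\|f_k\|_{M_{p,q}}=\|g\|_{M_{p,q}}$ while $\|f_k\|_{M^s_{p,q}}\asymp\langle k\rangle^s\|g\|_{M_{p,q}}$, the latter because $\langle\xi\rangle^s$ is essentially the constant $\langle k\rangle^s$ on the effective frequency support of $V_\phi f_k$. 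Sending $k\to\infty$ rules out $s<0$. The only mildly technical step is this last comparison, but since $V_\phi g$ is Schwartz it is a routine bookkeeping estimate; no chirp or stationary-phase analysis is required, as the unimodular inversion trick bypasses the more intricate arguments usually invoked for sharpness on $M_{p,q}$.
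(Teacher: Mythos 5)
Your proof is correct, but note first that the paper does not actually prove Theorem A: it imports the $s=0$ boundedness from Gr\"ochenig--Heil and the optimality from the cited works of Miyachi--Nicola--Rivetti--Tabacco--Tomita and Tomita, so there is no in-paper argument to match yours against. Relative to that, what you give is a clean derivation of the full statement from the $s=0$ case alone. The sufficiency half is the obvious embedding. The real content is the necessity argument: since $e^{-i\Delta}$ is bounded on $M_{p,q}$ for the same reason $e^{i\Delta}$ is (its symbol $e^{i|\xi|^2}$ is of the same type; alternatively $e^{-i\Delta}f=\overline{e^{i\Delta}\overline{f}}$ and the $M_{p,q}$ norm is invariant under conjugation), the factorization $\mathrm{Id}=e^{-i\Delta}\circ e^{i\Delta}$, valid on $\mathcal{S}'$ because both symbols are smooth and slowly increasing, converts the hypothesis into the embedding $M^s_{p,q}\hookrightarrow M_{p,q}$, and the modulated test functions $f_k$ then exclude $s<0$. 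The two norm computations you defer are indeed routine: $\|f_k\|_{M_{p,q}}=\|g\|_{M_{p,q}}$ is translation invariance of the outer $L^q$ norm in $\xi$, and Peetre's inequality $\langle\xi\rangle^{\pm s}\lesssim\langle k\omega\rangle^{\pm s}\langle\xi-k\omega\rangle^{|s|}$ combined with the rapid decay of $V_\phi g$ gives the two-sided bound $\langle k\rangle^{s}\|g\|_{M^{-|s|}_{p,q}}\lesssim\|f_k\|_{M^s_{p,q}}\lesssim\langle k\rangle^{s}\|g\|_{M^{|s|}_{p,q}}$, which is all you need. Two cosmetic points: drop the $2\pi$ in the modulation to match the paper's Fourier convention, and observe that no density of $\mathcal{S}$ is required even when $p=\infty$ or $q=\infty$, since the inequality is only ever tested on Schwartz functions. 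What the inversion trick buys you is a complete bypass of the chirp and stationary-phase constructions that sharpness proofs for $e^{-i|D|^\alpha}$ with $\alpha\neq 2$ typically require; it works here precisely because the inverse multiplier is again a Schr\"odinger propagator of the same class. What it does not buy is independence from Gr\"ochenig--Heil: both halves of your argument rest on that theorem, which remains the substantive analytic input, and you are right to take it as given since the paper does the same.
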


Comparing the results on $L^p$-Sobolev and modulation spaces,
we see that the Schr\"odinger operator is bounded on modulation spaces without loss of regularity,
while on $L^p$-Sobolev spaces loss of the order up to $2n | 1/p - 1/2 | $ occurs.
This is one of advantages in using modulation spaces.

The objective of this paper is to study the boundedness problem of the Schr\"odinger operator $e^{ i \Delta}$ 
on variation of modulation spaces, Wiener amalgam spaces. 
In \cite[Theorem 1.1]{cunanan sugimoto 2014}, Cunanan and Sugimoto proved that 
for $1 \leq p,q \leq \infty$, $e^{ i \Delta}$ is bounded from $W^s_{p,q}$ to $W_{p,q} $ if $s > n | 1/p - 1/q |$.
Conversely, in Cordero and Nicola \cite[Proposition 6.1]{cordero nicola 2010},
it was shown that
$e^{ i \Delta}$ is bounded from $W^s_{p,q}$ to $W_{p,q} $ only if $s \geq n | 1/p - 1/q |$.
Moreover, in \cite[Proposition 6.1]{cordero nicola 2010}, 
it was also stated that if $p = \infty$ and $q < \infty$, 
the necessary condition for the boundedness holds with the strict inequality,
namely, $e^{ i \Delta}$ is bounded from $W^s_{\infty,q}$ to $W_{\infty,q} $ only if $s > n/q $.
Therefore, except for the case $p = \infty$ and $q < \infty$, there is a gap
between the sufficient and necessary conditions for this boundedness problem.
That is, the critical case $s = n | 1/p - 1/q | $ is not included in the sufficient condition,
although it is included in the necessary condition. 
The goal of this paper is to determine whether this critical case is needed or not.
The exact answer is provided by the following.

\begin{theorem} \label{main thm}
Let $s \in \mathbb{R}$ and $1 \leq p,q \leq \infty$. 
\begin{enumerate} 
\item For $p = q$, $e^{i\Delta}$ is bounded from $W^s_{p,q}$ to $W_{p,q} $ if and only if $s \geq 0$.
\item For $p \neq q$, $e^{i\Delta}$ is bounded from $W^s_{p,q}$ to $W_{p,q} $  if and only if $s > n | 1/p - 1/q |$.
\end{enumerate} 
\end{theorem}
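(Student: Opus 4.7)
The plan splits according to the two parts of the theorem.

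For part (1), note that when $p=q$, Fubini's theorem applied to the STFT-based definitions of the norms gives
\[
\|f\|_{W^s_{p,p}}\sim\bigl\|V_\phi f(x,\xi)\,\langle\xi\rangle^{s}\bigr\|_{L^p_x L^p_\xi}=\bigl\|V_\phi f(x,\xi)\,\langle\xi\rangle^{s}\bigr\|_{L^p_\xi L^p_x}\sim\|f\|_{M^s_{p,p}},
\]
so $W^s_{p,p}=M^s_{p,p}$ as normed spaces. Part (1) then follows at once from Theorem A.

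For part (2), sufficiency of $s>n|1/p-1/q|$ is the Cunanan--Sugimoto result, and necessity of $s\ge n|1/p-1/q|$ is Cordero--Nicola; what remains is to exclude the critical exponent $s=n|1/p-1/q|$ when $p\ne q$. The guiding heuristic is that $e^{i\Delta}$ acts on the time-frequency plane as the symplectic shear $(x,\xi)\mapsto(x-2\xi,\xi)$, and because the $W_{p,q}$-norm integrates space and frequency asymmetrically when $p\ne q$, this shear distorts the norm by a power of the available frequency scale. Testing on an $f_N$ whose STFT is concentrated on a unit cube in space and on a cube of side $N$ in frequency gives
\[
\|e^{i\Delta}f_N\|_{W_{p,q}}\big/\|f_N\|_{W^s_{p,q}}\sim N^{\,n|1/p-1/q|-s},
\]
which only saturates, rather than diverges, at the critical $s$.

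To turn saturation into a genuine blow-up at the endpoint I would either sum such $f_N$ over dyadic frequency scales, where a Knapp-type construction should produce an extra logarithmic factor exactly when $s=n|1/p-1/q|$, or instead invoke the identity
\[
e^{i\Delta}f(x)=c_n\,e^{i|x|^2/4}\,\mathcal{F}\bigl[e^{i|\cdot|^2/4}f\bigr](x/2)
\]
together with the Fourier-transform correspondence between $M$- and $W$-spaces to transfer the problem onto modulation spaces, where the endpoint obstruction reduces to a sharp boundedness statement for chirp multiplication.

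The main obstacle is precisely this last step: the naive single-scale test function only reproduces the non-strict Cordero--Nicola bound, so separating the critical from the supercritical case requires either a careful multi-scale argument producing a genuine logarithmic divergence, or an explicit chirp-multiplication estimate that fails precisely at $s=n|1/p-1/q|$.
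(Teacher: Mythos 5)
Your treatment of part (1) and your reduction of part (2) to excluding the critical exponent $s=n|1/p-1/q|$ match the paper, and the shear heuristic for $e^{i\Delta}$ on the time-frequency plane is the right picture. But the proof has a genuine gap, which you yourself flag: the single-scale test function only recovers the non-strict Cordero--Nicola bound, and the two devices you propose to upgrade it (a dyadic Knapp-type sum producing a logarithm, or a chirp-multiplication estimate) are announced as obstacles rather than carried out. That upgrade is exactly the content of the paper's key lemma, and its mechanism is different from either of your suggestions. The authors test the (Fourier-transformed, sheared) inequality on a lattice superposition $f(t)=\sum_{\ell\in\mathbb{Z}^n}c_\ell\,\varphi(t-\ell)$ with \emph{arbitrary} finitely supported coefficients, not a single concentrated bump. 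A support/overlap analysis of the STFT, using that the shear $\xi\mapsto\xi\mp 2x$ carries $Q_m\times Q_{\pm 2m}$ into the region where $\widehat\varphi$ is bounded below, shows that boundedness of $e^{\pm i\Delta}\colon W^s_{p,q}\to W_{p,q}$ forces the sequence embedding
\[
\Bigl(\sum_{k\in\mathbb{Z}^n}|c_k|^p\Bigr)^{1/p}\lesssim\Bigl(\sum_{k\in\mathbb{Z}^n}\langle k\rangle^{sq}|c_k|^q\Bigr)^{1/q}.
\]
For $p<q<\infty$ one then takes $c_k=\langle k\rangle^{-s}|d_k|^{1/p}$ and dualizes in $\ell^{q/p}$ to conclude $\langle k\rangle^{-sp}\in\ell^{(q/p)^\prime}$, which requires $sp(q/p)^\prime>n$ strictly, i.e.\ $s>n(1/p-1/q)$; the case $q<p$ follows by duality and the lifting property. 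Your logarithmic-divergence intuition is morally what is happening (the borderline series $\sum_k\langle k\rangle^{-n}$ diverges logarithmically), but without the free coefficients $\{c_k\}$ and the $\ell^r$-duality step you have no mechanism that actually detects it.

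A second omission: your plan says nothing about the endpoint cases $q=\infty$ (with $p<\infty$) and $q=1$ (with $p>1$), where the sequence lemma requires $q<\infty$ and duality alone does not suffice. The paper disposes of these by assuming boundedness at the critical $s=n/p$, interpolating against the diagonal result $e^{i\Delta}\colon W_{p,p}\to W_{p,p}$ to produce a bounded map $W^{\widetilde s}_{p,\widetilde q}\to W_{p,\widetilde q}$ with $\widetilde s=n(1/p-1/\widetilde q)$ and $p<\widetilde q<\infty$, contradicting the already-established strict necessity in that range. Any complete proof needs some such argument for these two boundary lines.
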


Theorem \ref{main thm} mentions that unlike modulation spaces,
$e^{i \Delta}$ is not bounded on $W_{p,q} $ if $p \neq q$
(Wiener amalgam spaces coincide with modulation spaces in the case $p = q$).
However, the boundedness of unimodular Fourier multiplier $e^{-i |D|^\alpha}$ on $W_{p,q}$ 
for $0 \leq \alpha \leq 1$ was given 
by Cunanan and Sugimoto \cite[Corollary 2.1]{cunanan sugimoto 2014}.
Since $e^{-i |D|^\alpha} = e^{i \Delta}$ if $\alpha = 2$,
we have yet to determine an upper bound of $\alpha$ 
to obtain the boundedness of $e^{- i |D|^\alpha}$ on $W_{p,q} $.

We end this section with mentioning the plan of this paper. 
In Section \ref{sec2}, we will state basic notations which will be used throughout this paper, 
and then introduce the definitions and some basic properties of modulation and Wiener amalgam spaces.
After stating and proving some lemmas needed to show our main theorem in Section \ref{sec3}, 
we will actually prove it in Section \ref{sec4}.

\section{Preliminaries}\label{sec2}

\subsection{Basic notations} \label{sec2.1}
We collect notations which will be used throughout this paper.
We denote by $\mathbb{R}$ and $\mathbb{Z}$
the sets of reals and integers, respectively. 
The notation $a \lesssim b$ means $a \leq C b$ with a constant $C > 0$ which may be different in each occasion, and $a \sim b $ means $a \lesssim b$ and $b \lesssim a$. We write $\langle x \rangle = (1 + | x |^2)^{1/2}$.

We denote the Schwartz space of rapidly decreasing smooth functions
on $\mathbb{R}^n$ by $\mathcal{S} = \mathcal{S} (\mathbb{R}^n)$ and its dual,
the space of tempered distributions, by $\mathcal{S}^\prime = \mathcal{S}^\prime(\mathbb{R}^n)$. 
The Fourier transform and the inverse Fourier transform of $f \in \mathcal{S}(\mathbb{R}^n)$ are given by
\begin{equation*}
\mathcal{F} f  (\xi) = \widehat {f} (\xi) = \int_{\mathbb{R}^n}  e^{-i \xi \cdot x } f(x) d x
\quad\textrm{and}\quad
\mathcal{F}^{-1} f (x) = (2\pi)^{-n} \int_{\mathbb{R}^n}  e^{i x \cdot \xi } f( \xi ) d\xi,
\end{equation*}
respectively.
For $g \in \mathcal{S}^\prime (\mathbb{R}^n)$, 
the Fourier multiplier is denoted by
$g(D) f = \mathcal{F}^{-1} \left[g \cdot \mathcal{F} f \right]$,
and for $s \in \mathbb{R}$, the Bessel potential by
$(I - \Delta )^{s/2} f = \mathcal{F}^{-1} [ \langle \cdot \rangle^s \cdot \mathcal{F}f ]$ 
for $f \in \mathcal{S}(\mathbb{R}^n)$.

We will use some function spaces.
The Lebesgue space $L^p = L^p(\mathbb{R}^n) $ is equipped with the norm 
\begin{equation*}
\| f \|_{L^p} = \left( \int_{\mathbb{R}^n} \big| f(x) \big|^p dx \right)^{1/p} 
\end{equation*}
for $1 \leq p < \infty$. If $p = \infty$, $\| f \|_{\infty} = \textrm{ess.}\sup_{x\in\mathbb{R}^n} |f(x)|$. 
Moreover, we denote the $L^p$-Sobolev space (or the Bessel potential space) $L^p_s$
by $L^p_s (\mathbb{R}^n) = \{ f \in \mathcal{S}^\prime (\mathbb{R}^n) : \| (I-\Delta)^{s/2} f \|_{L^p} < \infty \}$
for $1 < p < \infty$ and $s\in\mathbb{R}$.
For $1 \leq q \leq \infty$, we denote by $\ell^q$ the set of all complex number sequences 
$\{ a_k \}_{ k \in \mathbb{Z}^n }$ such that
\begin{equation*}
 \| a_k \|_{ \ell^q } = \left( \sum_{ k \in \mathbb{Z}^n }  | a_k  |^q \right)^{ 1 / q } < \infty,
\end{equation*}
with the usual modification for $ q = \infty$. 
For the sake of simplicity, we write $ \| a_k  \|_{ \ell^q } $ 
instead of the more correct notation $ \| \{ a_k \}_{ k\in\mathbb{Z}^n } \|_{ \ell^q } $.

\subsection{Modulation and Wiener amalgam spaces} \label{sec2.2}
We give the definitions of modulation and Wiener amalgam spaces. 
They are based on Feichtinger \cite{feichtinger 1983} and Gr\"ochenig \cite{grochenig 2001}. 
We fix a function (called a window function) $g \in \mathcal{S} (\mathbb{R}^n) \setminus \{0\}$ 
and denote the short-time Fourier transform of $f \in \mathcal{S}^\prime (\mathbb{R}^n) $ with respect to $g$ by 
\begin{equation*}
V_g f (x,\xi) = \int_{\mathbb{R}^n} e^{ - i \xi \cdot t } \ \overline {g (t-x)} f(t) d t.
\end{equation*}
We will sometimes write $V_g [ f ] $ when the function $f$ is complicated. 
Now, for $s \in \mathbb{R}$ and $1 \leq p,q \leq \infty$, the modulation space $M_{ p , q }^s$ is denoted by
\begin{equation*}
M_{p,q}^s (\mathbb{R}^n)
= 
\left\{ f\in\mathcal{S}^\prime(\mathbb{R}^n) : \left\| f \right\|_{M_{p,q}^s} = \left\| \left\| \langle \xi \rangle^s V_g f (x,\xi) \right\|_{L^p(\mathbb{R}^n_x)} \right\|_{L^q(\mathbb{R}^n_\xi)} < +\infty \right\},
\end{equation*}
and the Wiener amalgam space $W_{ p , q }^s$ is denoted by
\begin{equation*}
W_{p,q}^s (\mathbb{R}^n)
= 
\left\{ f\in\mathcal{S}^\prime(\mathbb{R}^n) : \left\| f \right\|_{W_{p,q}^s} = \left\| \left\| \langle \xi \rangle^s V_g f (x,\xi) \right\|_{L^q(\mathbb{R}^n_\xi)} \right\|_{L^p(\mathbb{R}^n_x)} < +\infty \right\}.
\end{equation*}
These definitions are independent of the choices of the window function $g$. 
From the definitions, we see that $M_{p,p}^s = W_{p,p}^s $. 
We also remark that $M_{2,2}^s = W_{2,2}^s = L^2_s $.
In the following, if $s = 0$ we will simply write $ M_{p,q} $ and $ W_{p,q} $
instead of $ M_{p,q}^s $ and $ W_{p,q}^s $, respectively.

We write $ X_{p,q}^s $ instead of $ M_{p,q}^s $ or $ W_{p,q}^s $. 
We note that $X_{p,q}^s$ are Banach spaces
and $\mathcal{S} \subset X_{p,q}^s \subset \mathcal{S}^\prime$. 
In particular, $\mathcal{S}$ is dense in $X_{p,q}^s $ if $1 \leq p,q < \infty$. 
For $1 \leq p,q < \infty$, 
the dual space of $ X_{p,q}^s $ can be seen as $\big( X_{p,q}^s \big)^\prime =  X_{p^\prime , q^\prime  }^{-s} $.
Moreover, we have the following complex interpolation theorem. 
If $0 < \theta < 1$, $s = (1-\theta)s_1 + \theta s_2$, $ 1/p= (1-\theta)/p_1 + \theta/p_2 $ and $ 1/q= (1-\theta)/q_1 + \theta/q_2$, 
we have $\big( X_{p_1,q_1}^{s_1},X_{p_2,q_2}^{s_2} \big)_\theta = X_{p,q}^{s}$.

We finally present the basic inclusion relations and the lifting property.

\begin{proposition} \label{basic emb}
Let $s_1, s_2 \in \mathbb{R}$ and $1 \leq p, p_1, p_2, q_1, q_2 \leq \infty$.
\begin{enumerate} 
\item $X_{p_1,q_1}^{s_1} \hookrightarrow X_{p_2,q_2}^{s_2}$ if $p_1 \leq p_2$, $q_1 \leq q_2$ and $s_1 \geq s_2$.
\item $X_{p,q_1}^{s_1} \hookrightarrow X_{p,q_2}^{s_2} $ if $q_1 > q_2$ and $s_1 - s_2 > n ( 1 / q_2 - 1/q_1 )$.
\end{enumerate} 
\end{proposition}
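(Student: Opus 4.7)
The plan is to prove both parts by working directly with the STFT definition, reducing everything to classical inequalities; throughout I use that the norms are independent of the window (so I may introduce any convenient auxiliary window $\phi\in\mathcal{S}\setminus\{0\}$) and I treat $X=M$ and $X=W$ simultaneously, the only distinction being the order of the iterated Lebesgue norms.

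For part (1), the first step is to absorb the weights pointwise: since $\langle\xi\rangle\geq 1$ and $s_1\geq s_2$, one has $\langle\xi\rangle^{s_2}\leq\langle\xi\rangle^{s_1}$, so it suffices to prove the unweighted embedding $X_{p_1,q_1}\hookrightarrow X_{p_2,q_2}$ when $p_1\leq p_2$ and $q_1\leq q_2$. The main tool is the pointwise reproducing estimate $|V_g f(x,\xi)|\leq \|\phi\|_{L^2}^{-2}\,(|V_\phi f|*|V_\phi g|)(x,\xi)$, with convolution on $\mathbb{R}^{2n}$, which follows from the standard inversion formula for $V_g$. To this I would apply the mixed-norm Young inequality with exponents $r,s$ determined by $1+1/p_2=1/p_1+1/r$ and $1+1/q_2=1/q_1+1/s$; the hypotheses $p_1\leq p_2$ and $q_1\leq q_2$ ensure $1/r,1/s\in[0,1]$ so the exponents are admissible, and $V_\phi g\in\mathcal{S}(\mathbb{R}^{2n})$ makes its mixed $L^{r,s}$-norm finite. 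This gives the embedding for $M$ by taking the iterated norm as $L^{q_2}_\xi L^{p_2}_x$ on both sides and for $W$ by reversing the order, invoking in each case the corresponding variant of mixed-norm Young.

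For part (2), the strategy is to apply H\"older's inequality in the $\xi$ variable with the weight difference. Splitting $\langle\xi\rangle^{s_2}V_g f=\langle\xi\rangle^{s_2-s_1}\cdot\langle\xi\rangle^{s_1}V_g f$ and using H\"older with $1/q_2=1/q_1+1/r$ (so $r\in[1,\infty)$ by the strict inequality $q_2<q_1$) yields $\|\langle\xi\rangle^{s_2}V_g f(x,\cdot)\|_{L^{q_2}_\xi}\leq \|\langle\xi\rangle^{s_2-s_1}\|_{L^r_\xi}\,\|\langle\xi\rangle^{s_1}V_g f(x,\cdot)\|_{L^{q_1}_\xi}$. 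The weight factor $\|\langle\xi\rangle^{s_2-s_1}\|_{L^r(\mathbb{R}^n)}$ is finite precisely when $(s_1-s_2)r>n$, equivalently $s_1-s_2>n(1/q_2-1/q_1)$, which is exactly the hypothesis. For $W_{p,q}^s$, where $L^q_\xi$ is the inner norm, this is immediate after taking the outer $L^p_x$-norm; for $M_{p,q}^s$ the identical H\"older argument is applied instead to the outer $\xi$-integral of $\|V_g f(\cdot,\xi)\|_{L^p_x}$.

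There is no serious obstacle here: both statements reduce quickly to standard inequalities once the reproducing bound for $V_g$ is in hand. The only item requiring genuine care is bookkeeping the order of iterated norms between $M$ and $W$ and verifying that the exponent conditions in mixed-norm Young (for part (1)) and in H\"older (for part (2)) are compatible with, and in fact dictated by, the hypotheses on $p_1,p_2,q_1,q_2,s_1,s_2$.
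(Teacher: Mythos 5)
The paper does not prove Proposition \ref{basic emb} at all: it is quoted as a known fact from the standard references (Feichtinger, Gr\"ochenig), so there is nothing in the source to compare against line by line. Your argument is the standard textbook proof and is essentially correct. Part (2) is fine as written: H\"older with $1/q_2 = 1/q_1 + 1/r$ in the $\xi$-variable (applied to the inner norm for $W$ and to the outer norm of $\|V_gf(\cdot,\xi)\|_{L^p_x}$ for $M$), together with $\|\langle\cdot\rangle^{s_2-s_1}\|_{L^r}<\infty \Leftrightarrow s_1-s_2>n/r = n(1/q_2-1/q_1)$, is exactly the right mechanism. Part (1) is also correct in substance --- the pointwise domination $|V_gf|\le\|\phi\|_{L^2}^{-2}\,|V_\phi f|*|V_g\phi|$ plus the Benedek--Panzone mixed-norm Young inequality is the standard route, and your verification that the auxiliary exponents $r,s$ are admissible under $p_1\le p_2$, $q_1\le q_2$ is right.

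One small point deserves patching in part (1). The pointwise bound $\langle\xi\rangle^{s_2}\le\langle\xi\rangle^{s_1}$ reduces the claim to the embedding $X^{s_1}_{p_1,q_1}\hookrightarrow X^{s_1}_{p_2,q_2}$ with the \emph{common} weight $\langle\xi\rangle^{s_1}$, not to the unweighted embedding as you state. This is harmless but needs one more line: either invoke the lifting property (Proposition \ref{lift op}) to pass from $X^{s_1}_{p,q}$ to $X_{p,q}$ on both sides before applying the unweighted case, or carry the weight through the convolution via Peetre's inequality $\langle\xi\rangle^{s_1}\lesssim\langle\xi-\eta\rangle^{|s_1|}\langle\eta\rangle^{s_1}$, which only costs a polynomially weighted mixed norm of $V_g\phi$, still finite since $V_g\phi\in\mathcal{S}(\mathbb{R}^{2n})$. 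With that line added the proof is complete.
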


\begin{proposition} \label{lift op}
Let $s,t\in\mathbb{R}$ and $1 \leq p,q \leq \infty$. 
Then, $(I-\Delta)^{t/2}$ maps $ X_{p,q}^s $ isomorphically onto $X_{p,q}^{s-t}$,
and $\left\| (I-\Delta)^{t/2} f \right\|_{ X_{p,q}^{s-t} } \sim \left\| f \right\|_{ X_{p,q}^s }$. 
\end{proposition}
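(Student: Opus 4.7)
The strategy is first to reduce the isomorphism assertion to the one-sided bound $\|(I-\Delta)^{t/2}f\|_{X^{s-t}_{p,q}}\lesssim\|f\|_{X^{s}_{p,q}}$ for all $s,t\in\mathbb{R}$ and $1\le p,q\le\infty$. Applying such a bound with the parameter pair $(s-t,-t)$ gives $\|(I-\Delta)^{-t/2}g\|_{X^{s}_{p,q}}\lesssim\|g\|_{X^{s-t}_{p,q}}$, and since $(I-\Delta)^{-t/2}(I-\Delta)^{t/2}=I$ on $\mathcal{S}^\prime$, the two estimates together yield the norm equivalence $\|(I-\Delta)^{t/2}f\|_{X^{s-t}_{p,q}}\sim\|f\|_{X^{s}_{p,q}}$ together with the isomorphism, its inverse being $(I-\Delta)^{-t/2}$.

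To prove the one-sided bound, I would fix a window $g\in\mathcal{S}(\mathbb{R}^{n})\setminus\{0\}$ with $\widehat{g}$ compactly supported; this is permissible by the window-independence of the definitions. The symbol $\langle\xi\rangle^{t}$ is real, so $(I-\Delta)^{t/2}$ is formally self-adjoint on $L^{2}$, and a direct Fourier-side computation of $(I-\Delta)^{t/2}[M_{\xi}T_{x}g]$ yields
\begin{equation*}
V_g\bigl[(I-\Delta)^{t/2}f\bigr](x,\xi)=V_{G_{\xi}}f(x,\xi),\qquad
G_{\xi}=\mathcal{F}^{-1}\bigl[\langle\cdot+\xi\rangle^{t}\widehat{g}\,\bigr]=\langle\xi\rangle^{t}\,\tilde g_{\xi},
\end{equation*}
where $\tilde g_{\xi}=\mathcal{F}^{-1}\bigl[(\langle\cdot+\xi\rangle/\langle\xi\rangle)^{t}\widehat{g}\bigr]$. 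Because $\widehat{g}$ is compactly supported, Peetre's inequality applied to $\langle\eta+\xi\rangle^{t}/\langle\xi\rangle^{t}$ and its derivatives shows that $\{\tilde g_{\xi}\}_{\xi\in\mathbb{R}^{n}}$ is a bounded family in $\mathcal{S}(\mathbb{R}^{n})$: each Schwartz seminorm is controlled uniformly in $\xi$. Consequently
\begin{equation*}
\langle\xi\rangle^{s-t}\,V_g\bigl[(I-\Delta)^{t/2}f\bigr](x,\xi)=\langle\xi\rangle^{s}\,V_{\tilde g_{\xi}}f(x,\xi).
\end{equation*}

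The conclusion follows from the standard STFT pointwise estimate $|V_{\tilde g_{\xi}}f|\lesssim|V_g f|\ast|V_g\tilde g_{\xi}|$ on $\mathbb{R}^{2n}$, combined with the uniform Schwartz bound $|V_g\tilde g_{\xi}(y,\eta)|\le C_N\langle y\rangle^{-N}\langle\eta\rangle^{-N}$ (with constant independent of $\xi$) and Peetre's inequality $\langle\xi\rangle^{s}\lesssim\langle\xi-\eta\rangle^{|s|}\langle\eta\rangle^{s}$, which absorbs the weight into the convolution kernel. The desired bound then reduces to Young's inequality for mixed-norm spaces $L^{p}_{x}L^{q}_{\xi}$ (for the $M^{s-t}_{p,q}$-case) or $L^{q}_{\xi}L^{p}_{x}$ (for the $W^{s-t}_{p,q}$-case), which is valid once $N$ is taken large enough that $\langle\cdot\rangle^{|s|-N}\in L^{1}(\mathbb{R}^{n})$. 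The main obstacle is the $\xi$-dependence of $\tilde g_{\xi}$: the change-of-window estimate must be made uniform across the family, which is precisely why we require $\widehat{g}$ to be compactly supported.
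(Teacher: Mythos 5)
Your argument is correct. The paper itself offers no proof of Proposition \ref{lift op}; it is stated in the preliminaries as a known property with references to Feichtinger and Gr\"ochenig, and your proof is essentially the standard one from that literature (cf.\ Gr\"ochenig, \emph{Foundations of Time-Frequency Analysis}): commute the Bessel potential onto the time-frequency shifts, factor out $\langle\xi\rangle^{t}$, and control the resulting $\xi$-dependent family of windows uniformly via a band-limited $g$ and the change-of-window convolution inequality. The one point a careful reader would check --- that the window $\tilde g_{\xi}$ varies with the outer variable $\xi$, so the change-of-window bound must be uniform before Young's inequality applies --- is exactly the point you isolate and resolve, so there is no gap. (A purely cosmetic remark: the standard inequality reads $|V_{\tilde g_{\xi}}f|\le\|g\|_{L^2}^{-2}\,|V_{g}f|\ast|V_{\tilde g_{\xi}}g|$, with the windows in the second factor in the opposite order from what you wrote; since $|V_{\phi}\psi(z,\zeta)|=|V_{\psi}\phi(-z,-\zeta)|$, this does not affect the estimate.)
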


\section{Lemma}\label{sec3}

In this section, we state a key lemma and its proof,
which will be used for the proof of the ``ONLY IF'' part of Theorem \ref{main thm} (2).

\begin{lemma} \label{sequence}
Let $s \in \mathbb{R}$ and $1 \leq p,q < \infty$.
If $e^{ i \Delta}$ or $e^{ - i \Delta}$ is bounded from $W^s_{p,q}$ to $W_{p,q}$, then
\begin{equation} \label{seq ck}
\left( \sum_{k\in\mathbb{Z}^n}  | c_k |^p \right)^{1/p} 
\lesssim \left( \sum_{k\in\mathbb{Z}^n} \langle k \rangle^{sq} | c_k |^q \right)^{1/q}
\end{equation}
holds for all finitely supported sequences $\{ c_k \}_{k\in\mathbb{Z}^n}$ 
(that is, $c_k = 0$ except for a finite number of k's).
\end{lemma}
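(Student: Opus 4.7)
The idea is to test the hypothesis on a family of functions engineered from the sequence $\{c_k\}$, designed so that $\|f\|_{W^s_{p,q}}$ reproduces the right-hand side of (\ref{seq ck}) while $\|e^{\pm i\Delta}f\|_{W_{p,q}}$ bounds the left-hand side from below. Fix $\varphi \in \mathcal{S}(\mathbb{R}^n)$ with $\widehat{\varphi} \geq 0$, $\widehat{\varphi}(0) = 1$, and $\supp \widehat{\varphi} \subset B(0,r)$ for some small $r < 1/4$, and set
\begin{equation*}
f(x) = \sum_{k \in \mathbb{Z}^n} c_k\, e^{ik\cdot x}\varphi(x),
\end{equation*}
so that $\widehat{f} = \sum_k c_k\,\widehat{\varphi}(\,\cdot-k)$ is a disjoint sum of bumps centered on the integer lattice.

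To bound $\|f\|_{W^s_{p,q}}$ from above by the right-hand side of (\ref{seq ck}), I pick a window $g \in \mathcal{S}$ with $\widehat{g}$ also supported in $B(0,r)$ and compute $V_g f$ on the Fourier side via the standard identity $V_g f(x,\xi) = e^{-ix\cdot\xi}(2\pi)^{-n}\,V_{\widehat{g}}\widehat{f}(\xi,-x)$. The support condition forces $V_g f(x,\xi)$ to vanish unless $\xi$ lies within distance $2r<1/2$ of a unique integer $k$; on that slice a substitution $\zeta = \eta - k$ gives $|V_g f(x,\xi)| = |c_k|\,|G(x,\xi-k)|$ for a single Schwartz template $G$ independent of $k$. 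Using $\langle\xi\rangle\sim\langle k\rangle$ on the slice, the $W^s_{p,q}$-norm collapses to the desired weighted $\ell^q$-norm.

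For the lower bound on $\|e^{i\Delta}f\|_{W_{p,q}}$, completing the square inside the inverse Fourier integral defining $e^{i\Delta}f$ yields the explicit formula
\begin{equation*}
e^{i\Delta}f(x) = \sum_{k \in \mathbb{Z}^n} c_k\, e^{-i|k|^2}\, e^{ik\cdot x}\,\psi(x-2k),\qquad \psi := e^{i\Delta}\varphi,
\end{equation*}
where $\psi \in \mathcal{S}$ since its Fourier transform $e^{-i|\cdot|^2}\widehat{\varphi}$ is still smooth and compactly supported. Thus the quadratic phase has translated the frequency-$k$ bump to spatial center $2k$, and these centers are pairwise separated by distance $\geq 2$. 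Repeating the STFT calculation produces an analogous formula $|V_g[e^{i\Delta}f](x,\xi)| = |c_k|\,|\widetilde{G}(x-2k,\xi-k)|$ on each slice, for a Schwartz template $\widetilde{G}$ with $\widetilde{G}(0,0) \neq 0$. Restricting to a small ball around $x=2k$ on which $|\widetilde{G}|$ is bounded below, and exploiting the disjointness across $k$, one obtains $\|e^{i\Delta}f\|_{W_{p,q}} \gtrsim (\sum_k |c_k|^p)^{1/p}$; the case $e^{-i\Delta}$ differs only by the signs of the quadratic phases. Combining with the boundedness hypothesis delivers (\ref{seq ck}).

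The main obstacle is this last step: because the bumps $\psi(x-2k)$ overlap through their Schwartz tails, one cannot isolate a single summand by spatial support alone. The STFT must be used precisely to exploit simultaneous space--frequency localisation of each summand near $(x,\xi)=(2k,k)$, and one then has to verify the nonvanishing $\widetilde{G}(0,0) \neq 0$, which reduces to $\widehat{\varphi}(0),\widehat{g}(0) \neq 0$ together with the smallness of the supports forcing $e^{-i|\eta|^2} \approx 1$ on the integration region.
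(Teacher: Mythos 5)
Your proposal is correct and follows essentially the same route as the paper: the same family of test functions (a lattice of frequency-localized bumps with coefficients $c_k$), the same mechanism whereby the quadratic phase displaces the bump at frequency $k$ to spatial position $2k$ so that the $W_{p,q}$-norm of the image sees $\ell^p$ of the coefficients while the $W^s_{p,q}$-norm of the input sees the weighted $\ell^q$-norm, and the same Gabor-type computation for the upper bound. The only difference is presentational -- you compute $e^{i\Delta}f$ explicitly by completing the square, whereas the paper conjugates with the Fourier transform and uses the covariance $\lvert V_g[e^{\pm i|\cdot|^2}f](x,\xi)\rvert=\lvert V_{g e^{\mp i|\cdot|^2}}f(x,\xi\mp 2x)\rvert$ -- and the issues you flag (non-vanishing of $\widetilde G(0,0)$, isolating one summand via frequency rather than spatial disjointness) are resolved exactly as you indicate.
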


\begin{proof} [\bf Proof of Lemma \ref{sequence}]
Before beginning with the proof, we give the following notations which will be used here:
\begin{align*}
\| f \|_{ M_{p,q}^{(s_1, s_2)} } 
&= 
\left\| 
	\left\| \langle x \rangle^{s_1} \langle \xi \rangle^{s_2} V_g f (x,\xi) 
	\right\|_{L^p (\mathbb{R}^n_x)}
\right\|_{L^q (\mathbb{R}^n_\xi)} ;
\\
\| f \|_{ W_{p,q}^{(s_1, s_2)} } 
&= 
\left\| 
	\left\| \langle x \rangle^{s_1} \langle \xi \rangle^{s_2} V_g f (x,\xi) 
	\right\|_{L^q (\mathbb{R}^n_\xi)} 
\right\|_{L^p (\mathbb{R}^n_x)}.
\end{align*}
Note that $\| f \|_{ M_{p,q}^s } = \| f \|_{ M_{p,q}^{(0, s)} } $ 
and $\| f \|_{ W_{p,q}^s } = \| f \|_{ W_{p,q}^{(0, s)} }$.
From the fact $| V_g f (x,\xi) | = (2\pi)^{-n} | V_{\widehat g} \widehat f (\xi,-x) |$ 
(see \cite[Section 2.2]{grochenig heil 1999} for the detail calculation), 
$ \| f \|_{ W_{p,q}^{(s_1, s_2)} } \sim \| \widehat f \|_{ M_{q,p}^{(s_2, s_1)} } $ for $1 \leq p,q \leq \infty$. 
Hence, the assumption $\| e^{ \pm i \Delta} f \|_{ W_{p,q} } \lesssim \| f \|_{ W_{p,q}^{s} } $
is equivalent to the inequality 
\begin{equation} \label{equiv ass}
\left\| e^{ \mp i | \cdot |^2 } \widehat f \right\|_{ M_{q,p}^{(0, 0)} } 
\lesssim \left\| \widehat f \right\|_{ M_{q,p}^{(s, 0)} } .
\end{equation} 
In the following statements, we will consider (\ref{equiv ass}) and write $f$ instead of $\widehat f$ for the sake of simplicity.

\noindent{\bf Step 1:} 
We first consider the left hand side of (\ref{equiv ass}). 
We compute the short-time Fourier transform of $e^{ \pm i | \cdot |^2 } f$.
\begin{align*}
\left| V_g [e^{ \pm i | \cdot |^2 } f] (x,\xi) \right| 
&= 
\left| 
	\int_{ \mathbb{R}^n } e^{-i \xi \cdot t } 
	\
	\overline {g ( t - x )} \cdot e^{ \pm i | t |^2 } f ( t ) dt 
\right| 
\\
&=
\left| 
	\int_{ \mathbb{R}^n } e^{-i (\xi \mp 2x ) \cdot t } 
	\
	\overline {g ( t - x )} \cdot e^{ \pm i | t - x |^2 } f ( t ) dt 
\right| 
\\
&= 
\left| V_{g \cdot e^{ \mp i | \cdot |^2 }} f (x, \xi \mp 2x ) \right| ,
\end{align*}
where it should be remarked that 
$g \cdot e^{ \mp i | \cdot |^2 } \in \mathcal{S}(\mathbb{R}^n) \setminus \{ 0 \}$ 
since $e^{ \mp i | \cdot |^2 } $ and all their derivatives are $C^\infty$ slowly increasing functions.
Since the norm of modulation spaces is independent of the choice of window functions,
\begin{align} 
\label{equiv scho mod}
\begin{split}
\left\| e^{ \pm i | \cdot |^2 } f \right\|_{ M_{q,p}^{(0, 0)} } 
&\sim 
\left\| 
	\left\| V_{\psi \cdot e^{ \pm i | \cdot |^2 }} 
		\left[ e^{ \pm i | \cdot |^2 } f 
		\right]( x, \xi ) 
	\right\|_{L^q (\mathbb{R}^n_x)} 
\right\|_{L^p (\mathbb{R}^n_\xi)}
\\
&=
\left\| \left\| V_\psi f ( x, \xi \mp 2x ) \right\|_{L^q (\mathbb{R}^n_x)} \right\|_{L^p (\mathbb{R}^n_\xi)}
\end{split}
\end{align}
for any $\psi \in \mathcal{S}(\mathbb{R}^n) \setminus \{ 0 \}$. 

We choose $\varphi  \in \mathcal{S} (\mathbb{R}^n)$ satisfying that 
\begin{equation*}
\supp \varphi \subset [ - 1/8, 1/8 ]^n 
\quad\textrm{and}\quad 
| \widehat \varphi | \geq c >0 \textrm{ on } [ - 3/8, 3/8 ]^n
\end{equation*}
for some positive constant $c$, and $\psi \in \mathcal{S} (\mathbb{R}^n)$ satisfying that 
\begin{equation*}
\supp \psi \subset [ - 3/8, 3/8 ]^n
\quad\textrm{and}\quad 
\psi \equiv 1 \textrm{ on } [ -1/4, 1/4 ]^n
\end{equation*}
(see \cite[Lemma 4.3]{kobayashi miyachi tomita 2009} for existence of such functions $\varphi$ and $\psi$). We put
\begin{equation*}
f(t) = \sum_{\ell\in\mathbb{Z}^n}  c_\ell \cdot \varphi \left( t - \ell \right)
\end{equation*}
for a finitely supported sequence $\{c_\ell\}_{\ell\in\mathbb{Z}^n}$. 

Under the conditions above, 
we first consider the inner $L^q$ norm in the last quantity in (\ref{equiv scho mod}) and have
\begin{equation} \label{integ qk}
\left\| V_\psi f ( x, \xi \mp 2x ) \right\|_{L^q (\mathbb{R}^n_x)} 
\geq 
\left( \sum_{ k \in \mathbb{Z}^n } \int_{ Q_k } 
	\left| \int_{ \mathbb{R}^n } e^{-i (\xi \mp 2x) \cdot t} 
	\
	\overline{\psi (t-x)} \sum_{ \ell \in\mathbb{Z}^n } c_\ell 
		\cdot \varphi (t - \ell) dt 
	\right|^q dx
\right)^{1/q},
\end{equation}
where we set $ Q_k = k + [-1/8, 1/8]^n $. Considering the supports of the functions in the integral of (\ref{integ qk}), we have for all $x \in Q_k$
\begin{align*}
\supp \psi (\cdot-x) &\subset x + [ - 3/8, 3/8 ]^n \subset k + [ - 1/2, 1/2 ]^n ; \\
\supp \varphi (\cdot - \ell) &\subset \ell + [ - 1/8, 1/8 ]^n,
\end{align*}
so that we see that $\supp \psi (\cdot-x) \cap \supp \varphi (\cdot - \ell) = \varnothing$ if $k \neq \ell$ for all $x \in Q_k$. Hence, the right hand side of (\ref{integ qk}) is equal to
\begin{equation} 
\label{integ qk 2}
\left( \sum_{ k \in \mathbb{Z}^n } |c_k |^q \int_{ Q_k } 
	\left| \int_{ \mathbb{R}^n } e^{-i (\xi \mp 2x) \cdot t} \ \overline{\psi (t-x)} \cdot \varphi (t - k) dt 
	\right|^q dx 
\right)^{1/q}.
\end{equation}
Moreover, we realize that $\psi (\cdot-x) \equiv 1 $ on $\supp \varphi (\cdot - k )$, 
since we have the fact that $\supp \varphi (\cdot - k ) \subset k + [-1/8,1/8]^n \subset x + [-1/4,1/4]^n$ 
which is given from the equivalence $x \in Q_k \Leftrightarrow k \in Q_x$. 
Then, we have
\begin{align*}
(\ref{integ qk 2}) 
&= 
\left( \sum_{ k \in \mathbb{Z}^n } |c_k |^q \int_{ Q_k } 
	\left| \int_{ \mathbb{R}^n } e^{-i (\xi \mp 2x) \cdot t } \varphi (t - k) dt 
	\right|^q dx
\right)^{1/q} \\ 
&= 
\left( \sum_{ k \in \mathbb{Z}^n } |c_k |^q \int_{ Q_k } 
	\left| \widehat \varphi (\xi \mp 2x) 
	\right|^q dx 
\right)^{1/q}.
\end{align*}
Substituting this conclusion into the last quantity of (\ref{equiv scho mod}), we have
\begin{align} 
\label{integ qk 3}
\begin{split}
\left\| 
	\left\| 
		V_\psi f ( x, \xi \mp 2x ) 
	\right\|_{L^q (\mathbb{R}^n_x)} 
\right\|_{L^p (\mathbb{R}^n_\xi)} 
&\geq
\left\| 
	\left( \sum_{ k \in \mathbb{Z}^n } |c_k |^q \int_{ Q_k } 
		\left| 
		\widehat \varphi (\xi \mp 2x) 
		\right|^q dx 
	\right)^{1/q} 
\right\|_{L^p (\mathbb{R}^n_\xi)}
\\
&\geq
\left( \sum_{ m \in \mathbb{Z}^n} \int_{ Q_{\pm 2m } } 
	\left( \sum_{ k \in \mathbb{Z}^n } |c_k |^q \int_{ Q_k } 
		\left| 
		\widehat \varphi (\xi \mp 2x) 
		\right|^q dx 
	\right)^{p/q} d\xi 
\right)^{1/p} 
\\
&\geq
\left( \sum_{ m \in \mathbb{Z}^n} |c_m |^p \int_{ Q_{\pm 2m } } 
	\left( \int_{ Q_m } 
		\left| \widehat \varphi (\xi \mp 2x) 
		\right|^q dx 
	\right)^{p/q} d\xi 
\right)^{1/p}, 
\end{split}
\end{align}
where we set $Q_{\pm 2m } = \pm 2m + [-1/8,1/8]^n $. 
The assumption $| \widehat \varphi | \geq c >0 \textrm{ on } [ - 3/8, 3/8 ]^n$ 
and the fact $\xi \mp 2x \in ( \pm 2m + [-1/8,1/8]^n ) \mp 2 (m + [-1/8,1/8]^n) \subset [ - 3/8, 3/8 ]^n$ 
yield that the last expression in (\ref{integ qk 3}) is estimated from below by
\begin{align*} 
\left( \sum_{ m \in \mathbb{Z}^n} |c_m |^p \int_{ Q_{\pm 2m } } \left( \int_{ Q_m } c^q dx \right)^{p/q} d\xi \right)^{1/p},
\end{align*}
which gives
\begin{align*}
\left\| \left\| V_\psi f ( x, \xi \mp 2x ) \right\|_{L^q (\mathbb{R}^n_x)} \right\|_{L^p (\mathbb{R}^n_\xi)} 
\gtrsim
\left( \sum_{ m \in \mathbb{Z}^n} |c_m |^p \right)^{1/p} .
\end{align*}
Combining this result with (\ref{equiv scho mod}), we obtain
\begin{equation} \label{conc prop 1}
\left\| e^{ \pm i | \cdot |^2 } f \right\|_{ M_{q,p}^{(0, 0)} } 
\gtrsim 
\left( \sum_{ m \in \mathbb{Z}^n} |c_m |^p \right)^{1/p} .
\end{equation}

\noindent{\bf Step 2:} Next, we estimate the right hand side of (\ref{equiv ass}). 
The statement of this step is essentially the same as that of \cite[Theorem 12.2.4]{grochenig 2001}. 
Recall that we put
\begin{equation*}
f(t) = \sum_{\ell\in\mathbb{Z}^n}  c_\ell \cdot \varphi \left( t - \ell \right)
\end{equation*}
for a finitely supported sequence $\{c_\ell\}_{\ell\in\mathbb{Z}^n}$. 
We calculate the short-time Fourier transform of this function $f$ with the $x$-weight. 
Using the $N$-times integration by parts with respect to the $t$-variable, we have for any $N\geq 0$
\begin{align*}
\langle x \rangle^{s} 
\left| V_g f (x,\xi) \right|
&= 
\langle x \rangle^{s} 
\left| 
	\sum_{\ell\in\mathbb{Z}^n} c_\ell \int_{ \mathbb{R}^n } e^{ - i \xi \cdot t } 
	\
	\overline{g (t-x)}\cdot \varphi \left( t - \ell \right) dt 
\right|
\\
&\lesssim
\langle x \rangle^{s} \langle \xi \rangle^{-N} 
\sum_{\ell\in\mathbb{Z}^n} | c_\ell | 
	\sum_{\substack{ \beta \leq \alpha \\ | \alpha | \leq N } } 
	\int_{ \mathbb{R}^n } \left| ( \partial^{ \alpha - \beta } g ) (t-x) \right|  
	\cdot 
	\left| ( \partial^{ \beta } \varphi )\left( t - \ell \right) \right| dt.
\end{align*}
From the inequality 
$\langle x - \ell \rangle \lesssim \langle t - x \rangle \cdot \langle t - \ell \rangle$ 
and the Schwarz inequality, 
it follows that for any $M \geq 0$
\begin{align*}
&
\int_{ \mathbb{R}^n } \left| ( \partial^{ \alpha - \beta } g ) (t-x) \right|  \cdot  \left| ( \partial^{ \beta } \varphi )\left( t - \ell \right) \right| dt 
\\
&\lesssim
\langle x - \ell \rangle^{-M} 
\int_{ \mathbb{R}^n } 
\langle t - x \rangle^M \left| ( \partial^{ \alpha - \beta } g ) (t-x) \right| 
\cdot \langle t - \ell \rangle^{M}  \left| ( \partial^{ \beta } \varphi ) ( t - \ell ) \right| dt 
\\
&\leq
\langle x - \ell \rangle^{-M} 
\left\| \langle \cdot \rangle^M ( \partial^{ \alpha - \beta } g ) \right\|_{L^2}
\cdot
\left\| \langle \cdot \rangle^M ( \partial^{ \beta } \varphi ) \right\|_{L^2}
\\
&\sim
\langle x - \ell \rangle^{-M},
\end{align*}
since $g ,\varphi \in \mathcal{S}(\mathbb{R}^n)$.
Thus, by the Peetre inequality 
$\langle x \rangle^s \lesssim \langle \ell \rangle^s \cdot \langle x - \ell \rangle^{ | s | } $,
we have for any $M\geq0$
\begin{align*}
\langle x \rangle^{s} \left| V_g f (x,\xi) \right|
&\lesssim
\langle x \rangle^{s} \langle \xi \rangle^{-N} \sum_{\ell\in\mathbb{Z}^n} | c_\ell | \langle x - \ell \rangle^{-M} 
\\
&\lesssim
\langle \xi \rangle^{-N}\sum_{\ell\in\mathbb{Z}^n} \langle \ell \rangle^s | c_\ell | \langle x - \ell \rangle^{-M + |s|}.
\end{align*}
Substitute this estimate into the right hand side of (\ref{equiv ass}) and choose $N\geq0$ such that $N >n/p$.
Then, we have
\begin{align}
\label{seq right}
\begin{split}
\left\| f \right\|_{ M_{q,p}^{(s, 0)} } 
&= 
\left\| 
	\left\| 
		\langle x \rangle^{s} V_g f (x,\xi) 
	\right\|_{L^q (\mathbb{R}^n_x)} 
\right\|_{L^p (\mathbb{R}^n_\xi)} 
\\
&\lesssim
\left\| 
	\left\| 
		\langle \xi \rangle^{-N} \sum_{\ell\in\mathbb{Z}^n} 
		\langle \ell \rangle^s | c_\ell | \langle x - \ell \rangle^{-M + |s|} 
	\right\|_{L^q (\mathbb{R}^n_x)} 
\right\|_{L^p (\mathbb{R}^n_\xi)} 
\\
&\sim
\left\| 
	\sum_{\ell\in\mathbb{Z}^n} \langle \ell \rangle^s | c_\ell | \langle x - \ell \rangle^{-M + |s|} 
\right\|_{L^q (\mathbb{R}^n_x)}. 
\end{split}
\end{align}
Moreover, since $\mathbb{R}^n$ is decomposed by $\mathbb{R}^n = \bigcup_{m \in \mathbb{Z}^n} \widetilde Q_m $ with $ \widetilde Q_m = m + [-1/2,1/2)^n$, the last quantity in (\ref{seq right}) is expressed as
\begin{align*}
\left\{ 
	\sum_{m\in\mathbb{Z}^n} 
	\int_{\widetilde Q_m} 
	\left( 
		\sum_{\ell\in\mathbb{Z}^n} \langle \ell \rangle^s | c_\ell | \langle x - \ell \rangle^{-M + |s|} 
	\right)^q dx 
\right\}^{1/q} .
\end{align*}
Hence, by the fact that $\langle x - \ell \rangle \sim \langle m - \ell \rangle$ for all $x \in \widetilde Q_m$,
we have
\begin{align*}
\left\| f \right\|_{ M_{q,p}^{(s, 0)} } 
\lesssim
\left\{ 
	\sum_{m\in\mathbb{Z}^n} 
	\left( 
		\sum_{\ell\in\mathbb{Z}^n} \langle \ell \rangle^s | c_\ell | \langle m - \ell \rangle^{-M + |s|} 
	\right)^q
\right\}^{1/q} .
\end{align*}
Choosing $M\geq0$ such that $M > n + |s|$ 
and using the convolution relation $\ell^q \ast \ell^1 \hookrightarrow \ell^q$ for $1 \leq q \leq \infty$,
we obtain
\begin{equation}
 \label{conc prop 2}
\left\| f \right\|_{ M_{q,p}^{(s, 0)} } 
\lesssim 
\left( 
	\sum_{\ell\in\mathbb{Z}^n} \langle \ell \rangle^{sq} | c_\ell |^q 
\right)^{1/q} .
\end{equation}

\noindent{\bf Step 3:} Combing (\ref{conc prop 1}) and (\ref{conc prop 2}) with the assumption, we have
\begin{align*} 
\left( \sum_{ m \in \mathbb{Z}^n} |c_m |^p \right)^{1/p}
&\lesssim
\left\| e^{ \mp i | \cdot |^2 } \widehat f \right\|_{ M_{q,p}^{(0, 0)} } 
\sim 
\left\| e^{ \pm i \Delta} f \right\|_{ W_{p,q} }
\\
&\lesssim
\left\| f \right\|_{ W_{p,q}^{s} } 
\sim
\left\| \widehat f \right\|_{ M_{q,p}^{(s, 0)} }
\lesssim 
\left( \sum_{\ell\in\mathbb{Z}^n} \langle \ell \rangle^{sq} | c_\ell |^q \right)^{1/q} ,
\end{align*} 
which is the desired result.
\end{proof}

\section{Proof of the main theorem} \label{sec4}

In this section, we prove Theorem \ref{main thm}. 
As mentioned in Section \ref{sec1}, the ``IF'' part in Theorem \ref{main thm} was already proved 
by Cunanan and Sugimoto \cite{cunanan sugimoto 2014}
and also the ``ONLY IF'' part for $p = \infty$ and $1 \leq q < \infty$ 
in Theorem \ref{main thm} was proved by Cordero and Nicola \cite{cordero nicola 2010},
so that the main contribution of this paper is 
to show the ``ONLY IF'' part for the remaining cases of $p$ and $q$.
However, for the reader's convenience, 
we will prove the ``IF'' and ``ONLY IF'' parts for the whole range $1\leq p,q \leq \infty$.

\begin{proof}[\bf Proof of the ``IF'' part of Theorem \ref{main thm}]
We divide the proof into the following three cases:
  \begin{center}
    \begin{tabular}{lll}
    (a) $1\leq p = q \leq \infty$; & (b) $1 \leq p < q \leq \infty $; & (c) $1 \leq q < p \leq \infty $.
    \end{tabular}
  \end{center}

\noindent{\bf Case (a):} 
Since $M_{p,p} = W_{p,p} $, Theorem A gives the conclusion in this case.

\noindent{\bf Case (b):} 
Recall from Proposition \ref{basic emb} (1)
that $W_{p,p} \hookrightarrow W_{p,q}$ for $p < q$,
and from Proposition \ref{basic emb} (2) 
that $W_{p,q}^s \hookrightarrow W_{p,p}$ for $p < q$ and $s > n (1/p - 1/q)$.
Then, Theorem A combined with $ M_{p,p} = W_{p,p} $ gives
\begin{equation*}
\left\| e^{i\Delta} f \right\|_{ W_{p,q} } 
\lesssim
\left\| e^{i\Delta} f \right\|_{ W_{p,p} }
\lesssim
\left\| f \right\|_{ W_{p,p} }
\lesssim
\left\| f \right\|_{ W_{p,q}^s }.
\end{equation*}

\noindent{\bf Case (c):} 
As above, using Proposition \ref{basic emb} and Theorem A, we have the desired conclusion.
\end{proof}

\begin{proof}[\bf Proof of the ``ONLY IF'' part of Theorem \ref{main thm}]
We prove the ``ONLY IF'' part of Theorem \ref{main thm} by considering the following four cases:
\begin{center}
	\begin{tabular}{llll}
	(a) $1 \leq p = q \leq \infty$; & (b) $1 \leq p < q < \infty $; & (c) $1 < q < p \leq \infty $; &(d) otherwise.
	\end{tabular}
\end{center}

\noindent{\bf Case (a):} 
The relation $W_{p,p}^s = M_{p,p}^s$ and Theorem A complete the proof for this case.

\noindent{\bf Case (b):} 
Note that the condition $1 \leq p < q < \infty$ implies that $1 < q/p < \infty$.
Using (\ref{seq ck}) with $c_k = \langle k \rangle^{-s} |d_k|^{1/p}$ 
for a given finitely supported sequence $\{ d_k \}_{k\in\mathbb{Z}^n}$, we have
\begin{equation} \label{seq dk}
\sum_{k\in\mathbb{Z}^n} \langle k \rangle^{-sp} |d_k| 
\lesssim 
\left( \sum_{k\in\mathbb{Z}^n} | d_k |^{q/p} \right)^{p/q}.
\end{equation}
We take the supremum over $\{ d_k \}_{k\in\mathbb{Z}^n}$ such that $\| d_k \|_{\ell^{q/p}} = 1$. 
Then, we have by (\ref{seq dk})
\begin{equation*}
\left\| \langle k \rangle^{- s p } \right\|_{\ell^{(q/p)^\prime}} 
= 
\sup_{\| d_k \|_{\ell^{q/p}} = 1} \left| \sum_{k\in\mathbb{Z}^n} \langle k \rangle^{-sp} d_k \right| 
\lesssim 
1,
\end{equation*}
which yields that $(q/p)^\prime s p > n$, namely, $s > n(1/p-1/q) = n | 1/p - 1/q | $.

\noindent{\bf Case (c):} 
By duality, our assumption implies $e^{ - i \Delta} : W_{p^\prime,q^\prime} \to W^{-s}_{p^\prime,q^\prime} $, 
which is equivalent from Proposition \ref{lift op} that $e^{ - i \Delta} : W^s_{p^\prime,q^\prime} \to W_{p^\prime,q^\prime} $. 
Note that $1 < q < p \leq \infty $ implies that $1 \leq p^\prime < q^\prime < \infty $ 
and thus $1 < q^\prime / p^\prime < \infty $. 
Then, repeating the proof above, we obtain $(q^\prime / p^\prime )^\prime s p^\prime > n$, 
that is, $s > - n(1/p-1/q) = n | 1/p - 1/q | $.

\noindent{\bf Case (d):} We note that the remaining parts are, in detail, 
\begin{center}
	\begin{tabular}{ll}
	(d-1) $1 \leq p < \infty$ and $q = \infty$; 
	& (d-2) $1 < p \leq \infty $ and $ q = 1$.
	\end{tabular}
\end{center}
We first consider the case (d-1). 
We assume towards a contradiction that $s \leq n | 1/p - 1/q | = n/p$.
Note that we have $e^{ i \Delta}: W^s_{p,\infty} \to W_{p,\infty} $ by the assumption,
which implies that $e^{ i \Delta}: W^{ n/p }_{p,\infty} \to W_{p,\infty}$,
and $e^{ i \Delta}: W_{p,p} \to W_{p,p} $ by Theorem \ref{main thm} (1).
Interpolation yields 
for arbitrary $0 < \theta < 1$ 
that $e^{ i \Delta}: W^{\widetilde s}_{ p ,\widetilde q } \to W_{ p ,\widetilde q} $, 
where $1/ \widetilde q = \theta/p$ and $\widetilde s = (1-\theta)n/p$.
Now, we remark that $\widetilde s = n(1/p-1/\widetilde q)$ and $1 \leq p < \widetilde q  < \infty$.
However, this is a contradiction, 
since we already proved in Case (b) that, for $1 \leq p < \widetilde q  < \infty$, 
$e^{ i \Delta}: W^{\widetilde s}_{ p ,\widetilde q } \to W_{ p ,\widetilde q} $ holds 
only if $\widetilde s > n(1/p-1/\widetilde q)$. 
Thus, we obtain for $1 \leq p < \infty$, 
$e^{ i \Delta}: W^{s}_{p,\infty} \to W_{p,\infty}$ holds only if $s > n / p$. 
The case (d-2) can be also given by a similar argument, so that we omit the detail.
\end{proof}

\section*{Acknowledgments}
The authors appreciate Professor Mitsuru Sugimoto, 
who listened our talk and gave them valuable advice and comments.
The first author is supported by Grant-in-Aid for JSPS Research Fellow (No. 17J00359). 
The second author is partially supported by Grant-in-aid for Scientific Research from JSPS (No. 16K05201).

\end{document}